\pdfoutput=1                         
\documentclass{article}

\usepackage{geometry}
\usepackage[T1]{fontenc}

\usepackage{amsmath}          
\usepackage{amssymb,amsfonts}
\usepackage{mathrsfs}         
\usepackage{mathtools}        
\usepackage{stmaryrd}         
\usepackage{bm}
\usepackage{comment}          

\usepackage{graphicx}
\usepackage{float}
\usepackage{standalone}
\usepackage{subcaption}
\usepackage{booktabs}
\usepackage{colortbl}
\usepackage{siunitx}
\usepackage[backend=bibtex, giveninits=true]{biblatex}

\usepackage{pgfplots}
\usepackage{pgfplotstable}
\pgfplotsset{compat=newest}
\usepackage{tikz}
\usetikzlibrary{fadings}
\usetikzlibrary{patterns}
\usetikzlibrary{shadows.blur}
\usetikzlibrary{shapes}
\usetikzlibrary{calc}
\usetikzlibrary{positioning}
\usetikzlibrary{hobby}
\usetikzlibrary{patterns.meta, arrows.meta}
\usetikzlibrary{backgrounds}

\usepackage{amsthm}           

\usepackage{hyperref}
\usepackage{cleveref}

\newtheorem{theorem}{Theorem}[section]

\newtheorem{proposition}[theorem]{Proposition}

\newtheorem{rem}{Remark}[section]

\crefname{figure}{Figure}{Figures}\Crefname{figure}{Figure}{Figures}
\crefname{lemma}{Lemma}{Lemmas}\Crefname{lemma}{Lemma}{Lemmas}
\crefname{proposition}{Proposition}{Propositions}\Crefname{proposition}{Proposition}{Propositions}
\crefname{rem}{Remark}{Remarks}\Crefname{rem}{Remark}{Remarks}
\crefname{assum}{Assumption}{Assumptions}\Crefname{assum}{Assumption}{Assumptions}

\usepackage{macros}
\title{Analysis-aware interface coarsening for elliptic problems driven by an equilibrated flux estimator}
\author{%
  Denise Grappein\thanks{MOX, Department of Mathematics, Politecnico di Milano, Italy
    (\texttt{denise.grappein@polimi.it}). Member of the GNCS INdAM group.}
  \and
  Philipp Weder\thanks{Chair for Numerical Modelling and Simulation,
    \'Ecole Polytechnique F\'ed\'erale de Lausanne (EPFL), Switzerland
    (\texttt{philipp.weder@epfl.ch}).}%
}
\date{\today}

\begin{document}
\maketitle

\begin{abstract}
  We propose an \emph{a posteriori} error estimator that relies on an equilibrated flux reconstruction to enable analysis-aware interface coarsening decisions in elliptic interface problems. Interface coarsening consists of simplifying an internal interface that separates regions characterized by different physical properties in order to facilitate the meshing process and reduce the total number of degrees of freedom. 
This process extends the concept of \emph{defeaturing}, where small features on the boundary of a computational domain are removed.
Here, the features lie on an interior interface instead. The focus is on a Laplace problem with
a discontinuous diffusion coefficient. 
The estimator accounts for both the modeling error arising from the interface coarsening and the numerical error from the discrete approximation of the solution to the coarsened-interface problem.
It is localized on the mesh elements, and its constants explicitly track the contrast between the diffusion coefficients. The impact of different features can be assessed individually, yielding local contributions that indicate which features to coarsen and which to retain for a given mesh size.
The use of an equilibrated flux reconstruction allows us to sharply bound the bulk numerical source of error. We prove the reliability of the estimator and verify it across several numerical examples, including the case where an internal interface is fully removed.
\end{abstract}

\section{Introduction}

The geometric models appearing in engineering applications are often highly detailed, including features spanning multiple scales. This level of detail significantly increases the computational cost of numerical simulations by complicating mesh generation and increasing the number of degrees of freedom.
The process of removing, prior to simulation, geometric features considered irrelevant to the accuracy of the solution of a given PDE is called \emph{defeaturing}.

While defeaturing has historically been approached by exploiting \emph{a priori} knowledge of the geometry and simulated physical processes \cite{fine_automated_2000, foucault_mechanical_2004, thakur_survey_2009}, automated approaches require a quantitative, certified criterion to assess whether a given feature can be safely removed. This requirement has motivated the development of \emph{a posteriori} defeaturing error estimators, capable of evaluating the impact of feature removal directly on the simplified geometry, without requiring the solution of the fully detailed problem. 
In \cite{buffa_analysis-aware_2022}, a rigorous framework for analysis-aware defeaturing of the Poisson problem was introduced to address the case of \emph{boundary defeaturing}. More precisely, it covers the removal of features on the exterior boundary subject to Neumann boundary conditions. The Dirichlet case is treated in \cite{weder_analysis-aware_2025}, and \cite{weder_certified_2025} extends the estimates to quantities of interest other than the energy norm.
Such defeaturing error estimators account for the modeling error introduced by neglecting each feature and can be combined with a classical \emph{a posteriori} estimator of the discretization error to assess the relative importance of the two error contributions. In \cite{ buffa_adaptive_2024}, this was done using a residual-based estimator, whereas in \cite{buffa_equilibrated_2024}, an equilibrated flux error estimator was employed, with the advantage of yielding upper bounds free of unknown constants for the discretization error component. The resulting total-error estimators, accounting for both defeaturing and discretization errors, were used in \cite{antolin_analysisaware_2024, buffa_adaptive_2025} as part of a comprehensive adaptive strategy that allows for simultaneous mesh and geometry refinement.

In contrast to the works cited above that address cases in which the features lie on the domain's external boundary, the present work aims to exploit the same framework to simplify or remove interior interfaces separating regions with different diffusion coefficients. The defeaturing operation here consists of exchanging the coefficient within the feature with the one characterizing the neighboring material region. The goal remains to estimate the impact of this simplification on the solution's accuracy. To this end, we derive an equilibrated flux estimator reconstructed from the solution of the simplified problem, in which the constants involved in the upper bound explicitly depend on the contrast between the diffusion coefficients. The interface coarsening results in a simplified meshing procedure, as the mesh can be completely independent of the neglected features.

Several alternative strategies have been developed in the literature to handle problems involving interior material interfaces without resorting to interface-fitted meshes. A classical approach is to resort to homogenization (see, among others \cite{bakhvalov_homogenisation_1989, bensoussan_asymptotic_2011}), where a periodically heterogeneous medium is replaced by an effective homogeneous one. A different class of methods, broadly referred to as unfitted methods, maintains the physical domain or an approximation of it, allowing for almost arbitrary intersections with the surrounding mesh. Without aiming at proposing an exhaustive bibliography, we mention, for instance,  Immersed Boundary methods (see e.g.~\cite{peskin_immersed_2002}), Penalty methods (see e.g.~\cite{babuska_finite_1973}), Fictitious or Embedding  Domain methods (see e.g.~\cite{borgers_finite_1990, boffi_finite_2003}), the Cut Element method (see e.g.~\cite{burman_fictitious_2010,burman_fictitious_2012}) and the eXtended Finite Element method (see e.g.~\cite{fries_extendedgeneralized_2010}). Clearly, the use of unfitted approaches significantly simplifies mesh generation compared to standard conforming approaches, but imposing interface conditions requires extra work. 

The present work pursues a fundamentally different objective. Instead of seeking strategies to resolve the interface, we intentionally simplify the model by coarsening or removing it, and quantify the impact of this simplification by means of a computable \emph{a posteriori} error estimator. The estimator depends on the numerical solution of the coarsened interface problem and accounts for the contrast between the diffusion coefficients on either side of the interface. The intent is similar to the one pursued by Repin and coauthors in \cite{repin_combined_2012, weymuth_posteriori_2017}, with the main difference that we aim at providing a local quantifier of the modeling error introduced by the interface coarsening, allowing one to assess the impact of the coarsening of individual features of the original interface.  Such a local estimator could hence be employed in a pipeline similar to \cite{buffa_adaptive_2025} to adaptively coarsen or refine the interface without resorting to a predefined sequence of coarsened interfaces, as is instead the case in \cite{repin_combined_2012}. Furthermore, the estimator proposed herein is based on an equilibrated flux reconstructed from the numerical flux of the coarsened interface problem. The equilibrated flux is computed by a patch-wise equilibration procedure in which completely independent local Neumann problems are solved on elements sharing a node, thus making the computation of the equilibrated flux easily parallelizable.  

When the simplification results in the total removal of the interface, the computational mesh used to compute the defeatured solution and the corresponding equilibrated flux reconstruction can be completely independent of the interface itself. When a coarsened interface remains instead, we assume the mesh to conform to it. The intended target of the coarsening is hence an interface that admits a straightforward discretization, avoiding the need to resort, for example, to unfitted techniques for computing the discrete solution and the equilibrated flux. 

The rest of the manuscript is organized as follows: In \Cref{sec:notation and model problem}, we introduce the model transition problem with the exact and coarsened interfaces. In \Cref{sec:flux}, we recall the equilibrated flux reconstruction approach, introduce the combined error estimator, and prove its reliability. Several numerical experiments are presented in \Cref{sec:numerical experiments}, showcasing the estimator's reliability, efficiency, and robustness with respect to mesh and feature size variations as well as varying material contrasts. Finally, we offer our conclusions in \Cref{sec:conclusion}.
\section{Notation and model problem}
\label{sec:notation and model problem}
We begin this section by introducing the notation we will use throughout the paper.  Let $\omega \subset \mathbb{R}^d$, $d=\lbrace 2,3 \rbrace$, be any Lipschitz domain, i.e., an open and connected subset with Lipschitz-regular boundary. We denote by $|\omega|$ the measure of $\omega$, by $(\cdot,\cdot)_\omega$ the $L^2$ inner product on $\omega$, and by $||\cdot||_\omega$ the corresponding norm. We denote by $\partial \omega$ the boundary of $\omega$, and we write $\langle \cdot, \cdot\rangle_{\partial \omega}$ for the duality pairing on the boundary.
Consider a $d$-dimensional bounded Lipschitz domain $\domain \subset \R^d, d \in \{2, 3\}$, such that $\domain = \interior{\overline{\domaincomp{1}} \cup \overline{\domaincomp{2}}}$; see \cref{fig:model problem:exact domain}. We assume the components $\domaincomp{1}$ and $\domaincomp{2}$ to be bounded Lipschitz domains as well. In particular, the interface $\interface \coloneqq \overline{\domaincomp{1}} \cap \overline{\domaincomp{2}}$ is a Lipschitz boundary.
\begin{figure}
    \centering
    \begin{subfigure}[B]{0.42\textwidth}
    \centering
    \begin{tikzpicture}
    \def\length{5}
    \def\width{3}
    \def\featureLength{0.75}
    \def\featureWidth{1}
    \draw[thick] (0, 0) rectangle (\length, \width);

    \coordinate (interfaceBottom) at (\length / 2, 0);
    \coordinate (interfaceTop) at (\length / 2, \width);
    \coordinate (featureA) at (\length / 2 - \featureLength, \width / 2 - \featureWidth /2);
    \coordinate (featureB) at (\length / 2, \width / 2 - \featureWidth /2);
    \coordinate (featureC) at (\length / 2, \width / 2 + \featureWidth /2);
    \coordinate (featureD) at (\length / 2 - \featureLength, \width / 2 + \featureWidth /2);
    
    \draw[thick] (interfaceBottom) -- (featureB) -- (featureA) -- (featureD) -- (featureC) -- (interfaceTop);

    \node at (\length / 8, 0.5 * \width) {$\domaincomp{1}$};
    \node at (7 * \length / 8, 0.5 * \width) {$\domaincomp{2}$};

    \node[anchor=west] at ($(interfaceBottom)!0.85!(interfaceTop)$) {$\interface$};

\end{tikzpicture}
    \caption{}
    \label{fig:model problem:exact domain}
    \end{subfigure}\hspace{1cm}%
    \begin{subfigure}[B]{0.42\textwidth}
    \centering
    \begin{tikzpicture}
    \def\length{5}
    \def\width{3}
    \def\featureLength{0.75}
    \def\featureWidth{1}
    \draw[thick] (0, 0) rectangle (\length, \width);

    \coordinate (interfaceBottom) at (\length / 2, 0);
    \coordinate (interfaceTop) at (\length / 2, \width);
    \coordinate (featureA) at (\length / 2 - \featureLength, \width / 2 - \featureWidth /2);
    \coordinate (featureB) at (\length / 2, \width / 2 - \featureWidth /2);
    \coordinate (featureC) at (\length / 2, \width / 2 + \featureWidth /2);
    \coordinate (featureD) at (\length / 2 - \featureLength, \width / 2 + \featureWidth /2);

    \draw[thick] (interfaceBottom) -- (featureB);
    \draw[thick] (interfaceTop) -- (featureC);

    \fill[color=gray!15]  (featureA) rectangle (featureC);
    \node at ($(featureA)!0.5!(featureC)$) {$\patch$};

    \draw[thick, dashed, blue1] (featureB) -- (featureA) -- (featureD) -- (featureC);
    \node[anchor=east,blue1] at ($(featureA)!0.5!(featureD)$) {$\defbd$};

    \draw[thick, red] (featureB) -- (featureC);
    \node[anchor=west,red] at ($(featureB)!0.5!(featureC)$) {$\simpbd$};

    \node at (\length / 8, 0.5 * \width) {$\comp{\domain}{1}$};
    \node at (7 * \length / 8, 0.5 * \width) {$\comp{\defdomain}{2}$};

    \node[anchor=west] at ($(interfaceBottom)!0.85!(interfaceTop)$) {$\definterface$};

\end{tikzpicture}
    \caption{}
    \label{fig:model problem:defeatured domain}
    \end{subfigure}
    \caption{Example of a computational domain $\Omega$: (a) exact geometry with the components $\domaincomp{1}$ and $\domaincomp{2}$ separated by the interface $\Gamma$; (b) coarsened-interface geometry with the feature patch $\patch$ and the components $\comp{\defdomain}{1} = \mathrm{int}(\overline{\domaincomp{1}} \cup \overline{\patch})$ and $\comp{\defdomain}{2} = \domaincomp{2} \setminus \overline{\patch}$ separated by the interface $\definterface$.}
    \label{fig:model problem:domains}
\end{figure}

Consider the PDE problem
\begin{align*}
    \begin{cases}
        -\nabla \cdot (\diffcoeff \nabla u) = f & \text{ in } \domain,
        \\
        u = 0 & \text{ on } \partial \domain,
    \end{cases}
\end{align*}
for a piecewise constant diffusion coefficient $\diffcoeff \in L^\infty(\domain)$ such that
\begin{align*}
    \diffcoeff(\bx) \coloneqq \begin{cases}
        \kappa_1 & \text{if }\bx\in \domaincomp{1},
        \\
        \kappa_2 & \text{if }\bx\in \domaincomp{2},
    \end{cases}
\end{align*}
with $\kappa_i > 0, i \in \{1, 2\}$.
The corresponding weak formulation reads: \emph{Find $u \in \honez{\domain}$ such that}
\begin{align}
\label{eq:model problem:weak form}
    a(u, v) = (f, v )_\domain \qquad \forall v \in \honez{\domain},
\end{align}
where the bilinear form $a: \honez{\domain} \times \honez{\domain} \to \R$ is defined by
\begin{align*}
    a(u, v) \coloneqq( \diffcoeff\nabla u \cdot \nabla v)_\domain =( \kappa_1\nabla u,\nabla v)_{\domaincomp{1}} + ( \kappa_2\nabla u, \nabla v)_{\domaincomp{2}}.
\end{align*}
The problem in \cref{eq:model problem:weak form} is well-posed by the Lax-Milgram Theorem \cite{ern_finite_2021} and is referred to as the \emph{exact problem}.
The energy norm associated with the bilinear form used in \cref{eq:model problem:weak form} is defined by
\begin{align}
\label{eq:model problem:energy norm}
    \energynorm{v}{a} \coloneqq a(v, v)^\frac{1}{2},
\end{align}
for any function $v \in \honez{\domain}$.

Let us now consider a simplified decomposition of the domain $\domain$, in which a \emph{coarsened} interface $\Gamma_0$ separates $\Omega$ into two components $\defdomaincomp{1}$ and $\defdomaincomp{2}$, such that $\domain=\interior{\overline{\defdomaincomp{1}}\cup \overline{\defdomaincomp{2}}}$; see \cref{fig:model problem:defeatured domain}. We assume that $\Gamma \cap \Gamma_0 \neq \emptyset$ and we define \begin{align*}
    \defbd \coloneqq \interface \setminus \overline{\definterface} \quad \text{ and } \quad \simpbd \coloneqq \definterface \setminus\overline{\interface}.
\end{align*}
We assume that $\partial \omega=\mathrm{int}(\overline{\gamma}\cup \overline{\gamma_{0}})$ is a closed curve corresponding to the boundary of an open domain $\omega$.
In the following, we will refer to $\omega$ as \emph{interface feature}, or shortly as \emph{feature}. For this reason, we will refer to the coarsened interface $\Gamma_0$ also as the \emph{defeatured interface}.
We now introduce the problem with the coarsened interface, referred to as the \emph{defeatured problem}, by defining a new diffusion coefficient\vspace{0.2cm}
\begin{align*}
    \defdiffcoeff(\bx) \coloneqq \begin{cases}
        \kappa_1 & \text{if }\bx \in \comp{\defdomain}{1},
        \\
        \kappa_2 & \text{if }\bx \in \comp{\defdomain}{2},
    \end{cases}
\end{align*}\vspace{0.2cm}
and the bilinear form
$a_0:H_0^1(\domain) \times H_0^1(\domain) \rightarrow \mathbb{R}$ such that\vspace{0.1cm}
\begin{align*}
    a_0(u, v) \coloneqq (\defdiffcoeff\nabla u,\nabla v )_\domain = (\kappa_1\nabla u, \nabla v)_{\comp{\defdomain}{1}} + (\kappa_2\nabla u, \nabla v)_{\comp{\defdomain}{2}}.
\end{align*}
\vspace{0.2cm}
The weak formulation of the \emph{defeatured problem} reads: \emph{Find $u_0\in \honez{\domain}$ such that}
\begin{align}
\label{eq:defeatured problem:weak form}
    a_0(u_0, v) = (f, v)_\domain \qquad \forall v \in \honez{\domain}.
\end{align}
with the corresponding energy norm defined by
\begin{align}
    \energynorm{v}{a_0} \coloneqq a_0(v, v) ^\frac{1}{2} & \qquad\forall v \in \honez{\domain}.
\end{align}

Let us now consider a non-degenerate simplicial mesh $\T{h}$ covering $\domain$, such that $\overline{\domain}=\bigcup_{K \in \T{h}}K$. Here, we assume that the mesh faces match the defeatured interface $\Gamma_0$ or, equivalently, that $\Gamma_0$ is an approximation of $\Gamma$ that is easy to mesh conformingly. Instead, we make no assumption about how the mesh intersects the feature $\patch$ and its boundary $\partial \patch$.

Let us introduce the space
\begin{equation}Q_h(\domain)=\mathcal{P}_p(\mathcal{T}_h^0):=\left\lbrace q_h\in L^2(\domain):~\tr{q_h}{K}\in \mathcal{P}_p(K),~\forall K \in \mathcal{T}_h^0 \right\rbrace,\label{eq:Qh0}
\end{equation} 
with $\mathcal{P}_p(K)$ denoting the space of polynomials of degree at most $p\geq 1$ on $K \in \mathcal{T}_h^0$, and let
$$V_h^0(\domain)=\lbrace v \in \mathcal{C}^0(\overline{\domain})\cap Q_h(\domain): \tr{v}{\partial \domain} =0\rbrace.$$
For the sake of simplicity, we assume $f \in Q_h(\domain)$.
The discrete version of the defeatured problem can then be written as:
\emph{find $u_{0h} \in V_h^0(\domain)$ such that}
\begin{equation}
a_0( u_{0h}, v_h)=(f,v_h)_\domain \qquad \forall v_h \in V_h^0(\domain)\label{eq:prob_num_def}.
\end{equation}

\section{A defeaturing error estimator based on equilibrated fluxes}\label{sec:flux}
Given the respective solutions $u\in H_0^1(\domain)$, $u_0\in H_0^1(\domain)$, and $\uhz \in V_h^0(\domain)$ to the exact problem \eqref{eq:model problem:weak form}, to the defeatured problem \eqref{eq:defeatured problem:weak form}, and to the discrete defeatured problem \eqref{eq:prob_num_def}, we define the error function
\begin{align*}
    e \coloneqq u-\uhz.
\end{align*}
This error function can be decomposed as $e=e_\mathrm{d}+e_\mathrm{n}$. The first component is defined by
\begin{align*}
e_\mathrm{d}\coloneqq u-u_0
\end{align*}
and it measures the modeling error we introduce when we approximate the exact solution $u$ with the defeatured solution $u_0$. We will refer to it as the \textit{defeaturing component} of the error. The second component is instead
$$e_\mathrm{n}\coloneqq u_0-\uhz,$$
and it corresponds to the numerical source of error, introduced by approximating $u_0$ by its discrete counterpart $\uhz$.
Our aim is to bound the energy norm of the whole error function $e$ with a quantity solely depending on the discrete solution of the defeatured problem $\uhz$. In particular, we aim at building an \emph{a posteriori} error estimator based on an \emph{equilibrated flux} reconstructed from $\uhz$.

Let $\bm{\sigma}\in \Hdiv{\domain}$ be such that 
\begin{equation}
\nabla \cdot \bm{\sigma}=f \quad \text{ in } \domain.
\label{eq:flux_prop}
\end{equation}
By the Prager and Synge theorem (see \cite{braess_finite_2007,prager_approximations_1947}) we know that
$$\energynorm{e_\mathrm{n}}{a_0}\leq||\defdiffcoeff^{-\frac{1}{2}}(\bm{\sigma}+\defdiffcoeff\nabla \uhz)||_\domain,$$
which means that we can provide, through $\bm{\sigma}$, a sharp bound on the numerical component of the error, with a reliability constant equal to 1. Assuming we are able to reconstruct a flux $\flux\in \Hdiv{\domain}$ satisfying \eqref{eq:flux_prop} at a discrete level, we want to define an estimator $\mathcal{E}(\flux)$ such that
\begin{align*}
||e||_a\leq C_a \, \mathcal{E}(\flux),
\end{align*}
for a positive constant $C_a > 0$ independent of the size of the feature $\patch$.

In the remainder of this section, we first provide some details on the construction of $\flux$, and we then go through the derivation of $\estim{}(\flux).$

\subsection{Equilibrated flux reconstructions}
The procedure adopted to reconstruct $\flux$ from $\uhz$ is summarized here. For details, we refer to \cite{braess_finite_2007, buffa_equilibrated_2024, ern_polynomial-degree-robust_2015}.
An \textit{equilibrated flux reconstruction} is any function $\flux$ reconstructed from $\uhz$ such that \cite{ern_polynomial-degree-robust_2015}
\begin{align*}
\flux \in \Hdiv{\domain} \quad \text{ and } \quad (\nabla \cdot \flux, 1)_K=(f,1)_K\quad \forall K \in \T{h}.
\end{align*}
We here reconstruct $\flux$ in the Raviart-Thomas space of order $p$, i.e., in 
$$\bm{M}_h(\domain)\coloneqq \left\lbrace  \bm{v}_h\in \Hdiv{\domain}:~\tr{\bm{v}_h}{K} \in [\mathcal{P}_p(K)]^d+\bm{x}\mathcal{P}_p(K),~\forall K \in \T{h}\right\rbrace.$$ 
We recall that we assume $\T{h}$ to be conforming to the interface $\Gamma_0$, meaning that the diffusion coefficient $\defdiffcoeff$ always assumes a unique value on each $K\in \T{h}$.
Following the approach proposed in \cite{braess_equilibrated_2007,ern_polynomial-degree-robust_2015}, we define $\flux$ as the sum of local contributions reconstructed on patches $\wa$ of elements sharing a node $\bm{a} \in \Nh$, with
$\mathscr{N}_h$ denoting the set of vertices in $\T{h}$. This approach has the fundamental advantage of being easily parallelizable, as it is based on solving independent local sub-problems defined on patches of a few elements.

Let us denote by $\psi_{\bm{a}}$ the \textit{hat} function in $Q_h(\domain)\cap H^1(\Omega_0)$ taking value 1 in vertex $\bm{a}$ and 0 on all the other vertices. The boundary of a patch $\wa$ is denoted by $\partial \wa$, whereas we define $\partial \pi_{\bm{a}}^0\subseteq \partial \wa$ as
$$\partial \pi_{\bm{a}}^0=\lbrace \bm{x} \in \partial \wa~:\psi_{\bm{a}}(\bm{x})=0\rbrace.$$ Let us denote by $\Nh^{\mathrm{int}}$ the mesh nodes lying in the interior of $\domain$, and by $\Nh^\partial$ the ones lying on the boundary $\partial \domain$. We then introduce the spaces
$$\Mha=\left\lbrace \bm{v}_h \in \bm{M}_h(\wa):~\bm{v}_h\cdot \bm{n}_{\wa}=0 \text{ on }\partial \pi_{\bm{a}}^0\right\rbrace,$$
and
\begin{align*}
\Qha&\coloneqq \begin{cases}\lbrace q_h \in Q_h(\wa):(q_h,1)_{\wa}=0 \rbrace & \text{ if } \bm{a} \in \Nh^{\mathrm{int}}\\
Q_h(\wa) &\text{ if } \bm{a} \in \Nh^{\partial},
\end{cases}
\end{align*}
where  $\bm{M}_h(\wa)$ and $Q_h(\wa)$ are, respectively, the restrictions of $\bm{M}_h$ and $Q_h$ to the patch $\wa$.
 We then look for local equilibrated flux reconstructions of the form
\begin{equation}
\fluxa=\arg\min_{\bm{v}_h \in \Mha}||\kappa_0^{-\frac{1}{2}}\bm{v}_h+\psi_{\bm{a}}\defdiffcoeff^\frac{1}{2}\nabla u_0^h||_{\wa}\quad \text{ subject to }\quad  \nabla \cdot \bm{v}_h =\psia f-\nabla\psia\cdot\defdiffcoeff \nabla u_0^h \label{eq:local_argmin},
\end{equation} and then we set $\flux =\sum_{\bm{a}\in \Nh}\fluxa.$
The optimization problem \eqref{eq:local_argmin} is equivalent to looking for $\fluxa \in \Mha$ and $\lam \in \Qha$ such that,
\begin{align}
(\kappa_0^{-1}\fluxa,\bm{v}_h)_\wa-(\lam,\nabla \cdot \bm{v}_h)_\wa=-(\psia\nabla u_0^h,\bm{v}_h)_\wa \quad \forall \bm{v}_h \in \Mha \label{eq:mixed1}\\[0.5em]
(\nabla \cdot \fluxa,q_h)_\wa=(\psia f,q_h)_\wa-(\nabla\psia\cdot \defdiffcoeff\nabla u_0^h,q_h)_\wa \quad \forall q_h \in \Qha\label{eq:mixed2},
\end{align}
As shown in Lemma 3.5 \cite{ern_polynomial-degree-robust_2015}, the resulting flux reconstruction $\flux \in \Hdiv{\domain}$ is such that 
\begin{equation}
    (\nabla \cdot \flux,v_h)_K=(f,v_h)_K \quad \forall v_h \in \mathcal{P}_p(K). \label{eq:flux_orthogonality}
\end{equation}
\begin{rem}
    Requiring the mesh to conform to the coarsened interface enables a straightforward computation of both the discrete defeatured solution $\uhz$ and the corresponding equilibrated flux reconstruction $\flux$. It is, however, worth noting
    that an unfitted approach could be adopted to allow the coarsened interface to cut the mesh arbitrarily. In this case, the variational formulation of the defeatured discrete problem and the flux reconstruction procedure must be adapted in order to weakly impose the interface conditions, leading to some extra terms in the estimator (we refer to \cite{capatina_elliptic_2025, capatina_elliptic_2026} for details on the reconstruction of equilibrated fluxes involving unfitted interfaces).
    Hence, we select, among many possible coarsened interfaces, one that readily allows for a high-quality conforming mesh, yielding the greatest savings in computational effort.
\end{rem}

\subsection{The error estimator}
We now introduce the combined error estimator based on the equilibrated flux reconstruction from the previous section. To that end, let $$\bm{s}_0\coloneqq \kappa_0^{-\frac{1}{2}}\flux+\kappa_0^\frac{1}{2}\nabla u_0^h,$$
and let us define, for any $K \in \T{h}$, the quantity
$$\estim{0}^K\coloneqq ||\bm{s}_0||_K.$$
 Additionally, let us then define the positive constant $\rk\coloneqq \ktwokone$ and the subset of mesh elements having non-empty intersection with the feature $\patch$,
\begin{equation}
\mathcal{W}_h\coloneqq \lbrace K\in \T{h}:~|K\cap \patch|\neq 0\rbrace. \label{eq:Wh}
\end{equation}
In the following, we define and prove the reliability of our \textit{a posteriori} error bound.
\begin{proposition}
\label{prop:estimator reliability}
    Let $u$ be the solution to \eqref{eq:model problem:weak form} and $\uhz$ the solution to \eqref{eq:prob_num_def}. Then
    \begin{equation}
        \energynorm{e}{a}\leq \estim{\mathrm{d}}+\estim{\mathrm{n}}\label{eq:estimator}
    \end{equation}
    with 
    $$\estim{\mathrm{d}}=C(\kappa)\big(\sum_{K \in \mathcal{W}_h}||\flux||_{K \cap \patch}^2\big)^\frac{1}{2},$$
    $$
    \estim{\mathrm{n}}=\big(\sum_{K \in \mathcal{T}_h}||\bm{s}_0||_K^2\big)^\frac{1}{2}+\widetilde{C}(\kappa)\big(\sum_{K \in \mathcal{W}_h}||\bm{s}_0||_{K \cap \patch}^2\big)^\frac{1}{2},$$
    
    where $C(\kappa)=|\rho-1|\kappa_2^{-\frac{1}{2}}$  and  $\widetilde{C}(\kappa)=|\rho^\frac{1}{2}-1|$.
\end{proposition}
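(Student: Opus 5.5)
The plan is to bound $\energynorm{e}{a}$ through the dual characterization
\[
\energynorm{e}{a}=\sup_{v\in\honez{\domain},\ \energynorm{v}{a}=1} a(e,v),
\]
and to express $a(e,v)$ entirely in terms of $\flux$ and $\uhz$. Since $f\in Q_h(\domain)$ and $\nabla\cdot\flux|_K\in\mathcal P_p(K)$, the orthogonality \eqref{eq:flux_orthogonality} gives $\nabla\cdot\flux=f$ exactly in $\domain$. Using \eqref{eq:model problem:weak form} and integrating by parts (with $v=0$ on $\partial\domain$) we get
\[
a(e,v)=(f,v)_\domain-(\diffcoeff\nabla\uhz,\nabla v)_\domain=-(\flux+\diffcoeff\nabla\uhz,\nabla v)_\domain .
\]
I then split $\domain=(\domain\setminus\overline\patch)\cup\patch$. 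On $\domain\setminus\overline\patch$ we have $\diffcoeff=\defdiffcoeff$. On $\patch$, in the configuration of \cref{fig:model problem:domains}, we have $\defdiffcoeff=\kappa_1$ and $\diffcoeff=\kappa_2=\rho\kappa_1$. If the feature lies on the other side, the roles of the indices are swapped and the same argument applies.

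Outside the feature, $\flux+\diffcoeff\nabla\uhz=\kappa_0^{1/2}\bm s_0$. Cauchy--Schwarz then gives the bound $\|\bm s_0\|_{\domain\setminus\patch}\,\|\diffcoeff^{1/2}\nabla v\|_{\domain\setminus\patch}$. Inside $\patch$, the key step is the algebraic rewriting
\[
\flux+\kappa_2\nabla\uhz=\rho\,(\flux+\kappa_1\nabla\uhz)+(1-\rho)\flux=\rho\kappa_1^{1/2}\bm s_0+(1-\rho)\flux .
\]
Testing against $\nabla v=\kappa_2^{-1/2}(\kappa_2^{1/2}\nabla v)$ yields two bounds. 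The first is $\rho\kappa_1^{1/2}\kappa_2^{-1/2}\|\bm s_0\|_\patch=\rho^{1/2}\|\bm s_0\|_\patch$ times $\|\diffcoeff^{1/2}\nabla v\|_\patch$. The second is $|\rho-1|\kappa_2^{-1/2}\|\flux\|_\patch$ times the same factor. Since $\|\diffcoeff^{1/2}\nabla v\|_\patch\le1$, the flux term directly produces $\estim{\mathrm d}$ once $\|\flux\|_\patch^2$ is written as $\sum_{K\in\mathcal W_h}\|\flux\|^2_{K\cap\patch}$.

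The remaining task is to combine the two $\bm s_0$ contributions. Using a discrete Cauchy--Schwarz inequality on the pair of local energy norms of $v$, whose squares sum to $1$, gives
\[
\|\bm s_0\|_{\domain\setminus\patch}\,a_{\domain\setminus\patch}+\rho^{1/2}\|\bm s_0\|_{\patch}\,a_{\patch}\le\|\bm s_0\|_{\domain\setminus\patch}\,a_{\domain\setminus\patch}+\|\bm s_0\|_\patch\,a_\patch+|\rho^{1/2}-1|\,\|\bm s_0\|_\patch ,
\]
where $a_{\domain\setminus\patch}$ and $a_\patch$ denote the local energy norms of $v$. The first two terms are bounded by $\|\bm s_0\|_\domain=(\sum_K\|\bm s_0\|_K^2)^{1/2}$, and localizing $\|\bm s_0\|_\patch$ onto $K\cap\patch$ for $K\in\mathcal W_h$ gives $\estim{\mathrm n}$.

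I expect the main obstacle to be this splitting inside $\patch$. One has to choose the decomposition so that the $\bm s_0$ part is not multiplied by a large factor (such as $\rho$) in front of the global term, and so that only the excess $|\rho^{1/2}-1|$ appears, localized on the feature. Once the rewriting above is fixed, the rest is routine Cauchy--Schwarz.
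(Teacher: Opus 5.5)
Your proposal is correct and is essentially the paper's argument: both reach the identity $a(e,v)=((\rho-1)\flux,\nabla v)_\omega-(\bm{s}_0,\kappa^{1/2}\nabla v)_\Omega-((\rho^{1/2}-1)\bm{s}_0,\kappa^{1/2}\nabla v)_\omega$ and finish with the same Cauchy--Schwarz bounds. The only difference is that you use $\nabla\cdot\flux=f$ once, globally, to write $a(e,v)=-(\flux+\kappa\nabla u_0^h,\nabla v)_\Omega$. The paper instead adds and subtracts $(\kappa\kappa_0^{-1}\flux,\nabla v)_\Omega$ and integrates by parts separately on $\omega$ and $\Omega\setminus\omega$, a detour your shortcut avoids.
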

    
    \begin{proof}
Let $v \in \honez{\domain}$. Adding and subtracting $(\kkz\flux,\nabla v)_\domain$ we obtain
\begin{align}
a(e,v)&=(\diffcoeff\nabla(u-\uhz),\nabla v)_\Omega\nonumber\\&=(\diffcoeff\nabla u+\kkz\flux,\nabla v)_\domain-(\kkz\flux+\diffcoeff\nabla \uhz,\nabla v)_\domain\nonumber\\&=\mathrm{I}_a+\mathrm{I}_b\label{eq:a_e_v}
\end{align}
with
\begin{align*}
&\mathrm{I}_a=(\diffcoeff\nabla u+\kkz\flux,\nabla v)_\domain\\
&\mathrm{I}_b=-(\kkz\flux+\diffcoeff \nabla \uhz,\nabla v)_\domain.
\end{align*}
Starting from $\mathrm{I}_a$, let us first of all observe that
$$\kkz(\bm{x})=\begin{cases}
    1 &\text{ if } \bm{x} \in\domain \setminus \patch\\
    \rk &\text{ if }\bm{x} \in\patch,
\end{cases}\vspace{-0.5cm}$$ 
so that \begin{align*}
    \mathrm{I}_a&=(\kappa\nabla u+\flux, \nabla v)_{\domain\setminus \patch}+(\kappa\nabla u+\rho\flux, \nabla v)_{\patch}.
\end{align*}
If we denote by $\nn{\patch}$ and $\nn{\Omega\setminus\patch}$ the outward pointing unit normals respectively to the feature $\patch$ and to $\Omega \setminus\patch$, we observe that $\nn{\patch}=-\nn{\Omega\setminus\patch}$ on $\partial \patch$.
Then, integrating by parts both terms in $\mathrm{I}_a$, and using Equation \eqref{eq:model problem:weak form}, we obtain
\begin{align}
\mathrm{I}_a&=(f-\nabla \cdot \flux,v)_{\domain\setminus \patch}+(f-\rk\nabla \cdot \flux,v)_{\patch}+\langle (\rk-1)\flux \cdot \nn{\omega},v\rangle_{\partial \patch}\nonumber\\[0.3em]
    &=(f-\nabla \cdot \flux,v)_{\domain}-(f-\nabla \cdot \flux,v)_{\patch}+(f-\rk\nabla \cdot \flux,v)_{\patch}\nonumber\\[-0.3em]&\quad+\langle (\rk-1)\flux \cdot \nn{\omega},v\rangle_{\partial \patch}\nonumber\\[0.3em]&=(f-\nabla \cdot \flux,v)_{\domain}-((\rk-1)\nabla \cdot \flux,v)_{\patch}+\langle (\rk-1)\flux \cdot \nn{\omega},v\rangle_{\partial \patch}\nonumber\\[0.3em]
    &=(f-\nabla \cdot \flux,v)_{\domain}+((\rk-1)\flux,\nabla v)_{\patch},
    \label{step1_Ia}
    \end{align}
    where, in the last step, we have again used integration by parts.
Finally, by the Cauchy--Schwarz inequality, we obtain
\begin{align}
    \mathrm{I}_a&\leq\big(\hspace{-0.1cm}\sum_{K \in \T{h}}||f-\nabla \cdot \flux||_K^2\big)^\frac{1}{2} ||v||_\domain + \big(\hspace{-0.1cm}\sum_{K \in \mathcal{W}_h}||(\rk-1)\kappa_2^{-\frac{1}{2}}\flux||_{K \cap \patch}^2\big)^\frac{1}{2}(\kappa_2^\frac{1}{2}||\nabla v||_\patch)\nonumber \\&\leq
   |\rk-1|\kappa_2^{-\frac{1}{2}}\big(\sum_{K \in \mathcal{W}_h}||\flux||_{K \cap \patch}^2\big)^\frac{1}{2}\energynorm{v}{a},
\end{align}
where the first term vanishes according to the orthogonality relation \eqref{eq:flux_orthogonality}, as we have assumed $f \in Q_h$ and as $\nabla \cdot \bm{M}_h = Q_h$. 
Moving back to the term $\mathrm{I}_b$ in \eqref{eq:a_e_v}, we have
\begin{align}
\mathrm{I}_b&=-(\kkz\flux+\kappa\nabla u_0^h,\nabla v)_{\domain}=-((\kkz)^\frac{1}{2}\bm{s}_0,\kappa^\frac{1}{2}\nabla v)_{\domain}\nonumber\\&=-(\bm{s}_0,\kappa^\frac{1}{2}\nabla v)_{\domain\setminus \patch}-(\rk^\frac{1}{2}\bm{s}_0,\kappa^\frac{1}{2}\nabla v)_{\patch}\nonumber\\
&=-(\bm{s}_0,\kappa^\frac{1}{2}\nabla v)_{\domain}-((\rk^\frac{1}{2}-1)\bm{s}_0,\kappa^\frac{1}{2}\nabla v)_{\patch}\nonumber
\end{align} 
Then, by the Cauchy--Schwarz inequality, we obtain
\begin{align*}
    \mathrm{I}_b  &\leq \big(\sum_{K \in \T{h}}||\bm{s}_0||_K^2\big)^\frac{1}{2}\energynorm{v}{a}+|\rk^\frac{1}{2}-1|\big(\sum_{K \in \mathcal{W}_{h}}||\bm{s}_0||_{K\cap \omega}^2\big)^\frac{1}{2}\energynorm{v}{a}.
\end{align*}
Gathering the above estimates proves the assertion.
\end{proof}

\begin{rem}
In principle, integrating the quantity $\bm{s}_0$ over the intersections $K \cap \omega$ for the elements $K \in \mathcal{W}_h$ would require tailored quadrature rules, since the coarse mesh is not conforming to the feature $\omega$. This would, however, increase the computational effort required to assemble the estimator.
The crudest surrogate is to integrate over the whole element $K$ rather than over the intersection $K \cap \omega$, but this introduces significant noise into the defeaturing component of the estimator whenever the feature is small with respect to the mesh size. We instead follow an intermediate approach:
given a function $g_K : K \to \R$ for an element $K \in \mathcal{W}_h$, we approximate
\begin{align*}
    \int_{K \cap \omega} g_K \dd x \approx \theta_K \int_K g_K \dd x,
    \qquad \text{with} \qquad \theta_K \coloneqq \frac{|K \cap \omega|}{|K|}.
\end{align*}
If we assume that the feature is a polygon, or consider a polygonal approximation of it,  the volume $|K \cap \omega|$ can be evaluated easily. Let us remark that this simplified integration strategy does not affect either the discrete solution of the defeatured problem or the equilibrated flux reconstruction, but only the estimator, for which we aim for a cheap assembly.
\end{rem}

\begin{rem}
\label{rem:estimator:inclusions}
    Let us observe that \cref{prop:estimator reliability} holds also when $\Gamma_0=\emptyset$, i.e., when $\defdomaincomp{1}\equiv \Omega$ and $\defdomaincomp{2}=\emptyset$, as shown in~\cref{fig:numerical experiments:inclusion}. The proof of \cref{prop:estimator reliability} can be readily adapted by simply assuming $\patch=\Omega_2$. In this case, $u_{0h}$ and the corresponding equilibrated flux reconstruction $\flux$ can be built on a mesh that is completely unaware of the feature. The spirit is hence similar to that of \cite{buffa_equilibrated_2024}, with the difference that the features are assumed to be filled with a different material rather than being holes.
\end{rem}

\subsection{Generalization to the multiple feature case}
The estimate \eqref{eq:estimator} readily generalizes to the multiple feature case. Let us assume that $\gamma$ and $\gamma_0$ can both be written as the union of a finite number of disjoint components,
$$\gamma=\bigcup\nolimits_{i \in \I}\gamma_i \qquad \text{and} \qquad \gamma_0=\bigcup\nolimits_{i \in \I}\gamma_{0i} $$ such that
\begin{enumerate}
    \item[(i)] $\gamma_i \cap \gamma_j =\emptyset$ and $\gamma_{0i} \cap \gamma_{0j}=\emptyset$ for any $i,j \in \I$, $i \neq j$;
    \item[(ii)] $\forall i \in \I, \partial \patch_i=\overline{\gamma_i}\cup \overline{ \gamma_{0i}}$ is a closed curve corresponding to the boundary of an open Lipschitz domain $\patch_i$. 
\end{enumerate}
Let us define the index subsets
$$\I^{(1)}\coloneqq \lbrace i \in \I:~\patch_i \cap \defdomaincomp{1}\neq \emptyset\rbrace \quad \text{and} \quad \I^{(2)}\coloneqq \lbrace i \in \I:~\patch_i \cap \defdomaincomp{2}\neq \emptyset\rbrace, $$ and let $$\patch^{(1)}\coloneqq \bigcup\nolimits_{i \in \I^{(1)}}\omega_i \quad \text{and} \quad \patch^{(2)}\coloneqq \bigcup\nolimits_{i \in \I^{(2)}}\omega_i .$$ Next we define
\begin{equation}
\mathcal{W}_h^{(1)}\coloneqq \lbrace K\in \T{h}:~|K\cap \patch^{(1)}|\neq 0\rbrace \quad \text{and} \quad \mathcal{W}_h^{(2)}\coloneqq \lbrace K\in \T{h}:~|K\cap \patch^{(2)}|\neq 0\rbrace.
\end{equation}
Let us observe that $\mathcal{W}_h^{(1)}$ and $\mathcal{W}_h^{(2)}$ are disjoint sets, as we are assuming the mesh $\T{h}$ to be conforming to $\Gamma_0$. The estimate in \eqref{eq:estimator} is hence still valid, provided that $\estim{\mathrm{d}}$ and $\estim{\mathrm{n}}$ are redefined by
$$\estim{\mathrm{d}}\coloneqq C(\kappa)\Big(\sum_{K \in \mathcal{W}_h^{(1)}}||\flux||_K^2\Big)^\frac{1}{2}+C(\kappa)\rho^{-\frac{1}{2}}\Big(\sum_{K \in \mathcal{W}_h^{(2)}}||\flux||_K^2\Big)^\frac{1}{2} $$ 
and 
$$\estim{\mathrm{n}}\coloneqq \big(\sum_{K \in \mathcal{T}_h}||\bm{s}_0||_K^2\big)^\frac{1}{2}+\widetilde{C}(\kappa)\big(\sum_{K \in \mathcal{W}_h^{(1)}}||\bm{s}_0||_K^2\big)^\frac{1}{2}+\widetilde{C}(\kappa)\rho^{-\frac{1}{2}}\big(\sum_{K \in \mathcal{W}_h^{(2)}}||\bm{s}_0||_K^2\big)^\frac{1}{2},$$
where $\rho=\tfrac{\kappa_2}{\kappa_1}$, and the scaling by $\rho^{-\frac{1}{2}}$ for the terms involving $K \in \mathcal{W}_h^{(2)}$ derives from the fact that $\kkz={\rho}^{-1}$ in $\patch^{(2)}$.
\section{Numerical experiments}
\label{sec:numerical experiments}
We present numerical tests to validate the proposed error estimator. All results are obtained with polynomial order $p=1$; i.e., we use linear Lagrange finite elements to compute the primal solution $\uhz$ and reconstruct the corresponding equilibrated flux in a Raviart-Thomas space of order 1.  

The numerical defeatured solution and the equilibrated flux reconstruction are always computed on meshes that conform to the coarsened interface $\Gamma_0$ but intersect $\patch_i$ arbitrarily.
Errors are computed with respect to a reference solution, which we obtain by solving the exact problem numerically on a highly refined mesh conforming to the original interface $\Gamma$.
The experiments are implemented using the \texttt{FEniCSx} library~\cite{baratta_dolfinx_2023} with meshes generated by \texttt{gmsh}~\cite{geuzaine_gmsh_2009}.

The geometries considered in the first three experiments are reported in \cref{fig:numerical experiments:geometries}. In test T1 (\cref{fig:numerical experiments:geometries:sqinsq}) we analyze the behavior of the error estimator under mesh refinement and feature size variation; in test T2 (\cref{fig:numerical experiments:geometries:leftright}), instead, we fix the feature and the mesh size and we vary the contrast between the diffusion coefficients; in T3 (\cref{fig:numerical experiments:geometries:multi}) we use the proposed estimator in a multifeature scenario as a tool to identify the most relevant features at a given mesh size. In the fourth test, labeled T4, we finally use the proposed estimator in a different setting, using it as a tool to assess the impact of a material inclusion embedded in a domain with a different diffusion coefficient.
\begin{figure}[t]
    \centering
    \begin{subfigure}{0.25\textwidth}
    \centering
    \begin{tikzpicture}[scale=0.5]
    \def\outer{5}
    \def\inner{2}
    \def\featureWidth{.6}
    \filldraw[thick, fill=gray!15] (-\outer / 2, -\outer / 2) rectangle (\outer / 2, \outer / 2);
    \filldraw[thick, fill=white] (-\inner / 2, -\inner / 2) rectangle (\inner / 2, \inner / 2);

    \node[anchor=south west] at (-\outer / 2, -\outer / 2) {$\domaincomp{2}$};
    \node[anchor=south west] at (-\inner / 2, -\inner / 2) {$\defdomaincomp{1}$};

    \coordinate (featureA) at (\featureWidth / 2, \inner / 2);
    \coordinate (featureB) at (\featureWidth / 2, \inner / 2 + \featureWidth);
    \coordinate (featureC) at (-\featureWidth / 2, \inner / 2 + \featureWidth);
    \coordinate (featureD) at (-\featureWidth / 2, \inner / 2);
    
    \fill[color=white] (-\featureWidth / 2, \inner / 2) rectangle (\featureWidth / 2, \inner / 2 + \featureWidth);
    \node[font=\scriptsize] at ($(featureA)!0.5!(featureC)$) {$\patch$};

    \draw[thick, red1] (featureD) -- (featureA);
    \node[red1, anchor=north, font=\scriptsize] at ($(featureA)!0.5!(featureD)$) {$\simpbd$};
    
    \draw[thick, blue1] (featureA) -- (featureB) -- (featureC) -- (featureD);
    \node[blue1, anchor=south, font=\scriptsize] at ($(featureB)!0.5!(featureC)$) {$\defbd$};

    \node[anchor=west] at (\inner / 2, 0) {$\definterface$};
    
\end{tikzpicture}
    \caption{Test T1}
    \label{fig:numerical experiments:geometries:sqinsq}
    \end{subfigure}\medskip\\
    \begin{subfigure}{0.5\textwidth}
    \centering
    \begin{tikzpicture}[scale=0.8]
    \def\length{6}
    \def\width{3}
    \def\featureLength{0.75}
    \def\featureWidth{0.75}
    \fill[color=gray!15] (\length / 2, 0) rectangle  (\length, \width);
    \draw[thick] (0, 0) rectangle (\length, \width);

    \coordinate (interfaceBottom) at (\length / 2, 0);
    \coordinate (interfaceTop) at (\length / 2, \width);
    \coordinate (featureA) at (\length / 2 - \featureLength, \width / 2 - \featureWidth /2);
    \coordinate (featureB) at (\length / 2, \width / 2 - \featureWidth /2);
    \coordinate (featureC) at (\length / 2, \width / 2 + \featureWidth /2);
    \coordinate (featureD) at (\length / 2 - \featureLength, \width / 2 + \featureWidth /2);

    \draw[thick] (interfaceBottom) -- (featureB);
    \draw[thick] (interfaceTop) -- (featureC);

    \fill[color=gray!15]  (featureA) rectangle (featureC);
    \node at ($(featureA)!0.5!(featureC)$) {$\patch$};

    \draw[thick, blue1] (featureB) -- (featureA) -- (featureD) -- (featureC);
    \node[anchor=east,blue1] at ($(featureA)!0.5!(featureD)$) {$\defbd$};

    \draw[thick, red] (featureB) -- (featureC);
    \node[anchor=west,red] at ($(featureB)!0.5!(featureC)$) {$\simpbd$};

    \node at (\length / 8, 0.5 * \width) {$\comp{\domain}{1}$};
    \node at (7 * \length / 8, 0.5 * \width) {$\comp{\defdomain}{2}$};

    \node[anchor=west] at ($(interfaceBottom)!0.85!(interfaceTop)$) {$\definterface$};

\end{tikzpicture}
    \caption{Test T2}
    \label{fig:numerical experiments:geometries:leftright}
    \end{subfigure}\hfill
    \begin{subfigure}{0.5\textwidth}
    \centering
    \begin{tikzpicture}[scale=0.65]
    \def\totalFeatures{6}
    \def\length{6}
    \def\width{3.7}
    \def\centerX{\length/2}
    \def\leaderLength{0.7}

    \fill[gray!15] (\centerX, 0) rectangle (\length, \width);

    \draw[thick] (0, 0) rectangle (\length, \width);

    \draw[thick, dotted] (\centerX, 0) -- (\centerX, \width);

    \coordinate (lastPoint) at (\centerX, \width);

    \foreach \i/\s in {
        1/0.35, 
        2/0.3, 
        3/0.2, 
        4/0.2, 
        5/0.3, 
        6/0.35
    } {
        \pgfmathsetmacro{\yPos}{\width - (\i * \width / (\totalFeatures + 1))}

        \pgfmathsetmacro{\side}{isodd(\i) ? 1 : -1}
        \pgfmathsetmacro{\col}{isodd(\i) ? "blue1" : "red1"}
        \pgfmathsetmacro{\fillCol}{isodd(\i) ? "white" : "gray!15"}

        \coordinate (entry) at (\centerX, \yPos + \s/2);
        \coordinate (exit)  at (\centerX, \yPos - \s/2);
        \coordinate (outerCornerTop) at (\centerX + \side*\s, \yPos + \s/2);
        \coordinate (outerCornerBot) at (\centerX + \side*\s, \yPos - \s/2);

        \draw[thick] (lastPoint) -- (entry);

        \fill[\fillCol] (entry) -- (outerCornerTop) -- (outerCornerBot) -- (exit) -- cycle;
        \draw[thick, \col] (entry) -- (outerCornerTop) -- (outerCornerBot) -- (exit);
        
        \ifodd\i
            \draw[thin, \col] (\centerX + \s/2, \yPos) -- (\centerX - \leaderLength, \yPos) node[left, font=\normalsize] {$\patch_{\i}$};
        \else
            \draw[thin, \col] (\centerX - \s/2, \yPos) -- (\centerX + \leaderLength, \yPos) node[right, font=\normalsize] {$\patch_{\i}$};
        \fi

        \coordinate (lastPoint) at (exit);
    }

    \draw[thick] (lastPoint) -- (\centerX, 0);

    \node[anchor=west] at (0.2, \width / 2) {$\domaincomp{1}$};
    \node[anchor=east] at (\length - 0.2, \width / 2) {$\domaincomp{2}$};

\end{tikzpicture}
    \caption{Test T3}
    \label{fig:numerical experiments:geometries:multi}
    \end{subfigure}
    \caption{Geometries considered in the numerical experiments. The shaded and white areas correspond to different diffusion coefficients in the exact problem \eqref{eq:model problem:weak form}}
    \label{fig:numerical experiments:geometries}
\end{figure}

\subsection{T1: Interface coarsening, single feature}
In this first numerical experiment we aim at analyzing the behavior of the proposed error estimator considering the geometry reported in \cref{fig:numerical experiments:geometries:sqinsq}, where $\defdomaincomp{1}$ is a square of edge $l=1$, the feature $\overline{\patch}_{\epsilon}$ is a square of edge $\epsilon<l$ and $\Omega=\mathrm{int}(\overline{\domaincomp{2}}\cup \overline{\defdomaincomp{1}}\cup \overline{\patch_\epsilon})$ is a square of edge $L=2.5$. 
The exact problem is defined as in \eqref{eq:model problem:weak form}, with $\kappa_1=1$, $\kappa_2 \in \lbrace{2,10\rbrace}$ and $f \equiv 1$. 

\Cref{fig:numerical experiments:mesh_kp2} reports the total estimator $\estim{}$, its components 
$\estim{\mathrm{d}}$ and $\estim{\mathrm{n}}$ and the energy norm of the error $||e||_a$ for $\kappa_2=2$, for decreasing mesh sizes $h$ and three different feature sizes. 
\begin{figure}[h]
	\centering
	\hspace*{0.5cm}
	
	\begin{subfigure}[T]{1\textwidth}
		\centering
		\includegraphics[width=0.8\linewidth]{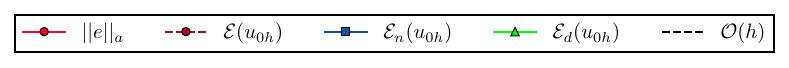}
	\end{subfigure}
	\begin{subfigure}[T]{0.31\textwidth}
		
		\includegraphics[width=1\linewidth]{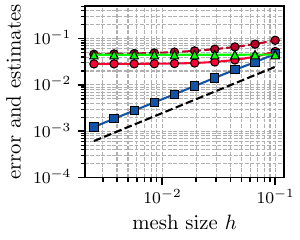}
		\caption{$\epsilon = 0.25$}
		\label{fig:numerical experiments:mesh:big_kp2}
	\end{subfigure}
	\hfill
	\begin{subfigure}[T]{0.31\textwidth}
		\includegraphics[width=1\linewidth]{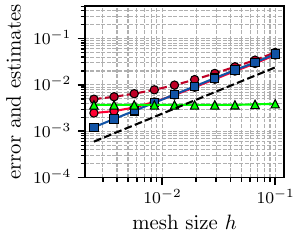}
		\caption{$\epsilon = 0.025$}
		\label{fig:numerical experiments:mesh:intermediate_kp2}
	\end{subfigure}
	\hfill
	\begin{subfigure}[T]{0.31\textwidth}
		\includegraphics[width=1\linewidth]{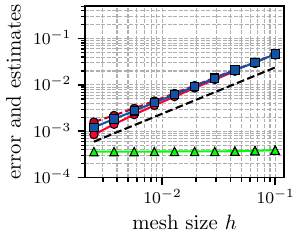}
		\caption{$\epsilon = 0.0025$}
		\label{fig:numerical experiments:mesh:small_kp2}
	\end{subfigure}
	\caption{T1: Energy norm of the error, total estimator and estimator components under mesh refinement and for three fixed feature sizes; $\kappa_2=2$.}
	\label{fig:numerical experiments:mesh_kp2}
\end{figure}
\begin{figure}[H]
	\centering
	\vspace*{-0.1cm}
	\begin{subfigure}[T]{1\textwidth}
		\centering
		\includegraphics[width=0.8\linewidth]{figures/numerics/mesh/legend_estimate.pdf}
	\end{subfigure}
	\begin{subfigure}[T]{0.31\textwidth}
		\includegraphics[width=1\linewidth]{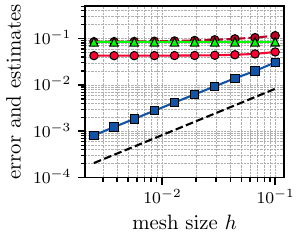}
		\caption{$\epsilon = 0.25$}
		\label{fig:numerical experiments:mesh:big}
	\end{subfigure}
	\hfill
	\begin{subfigure}[T]{0.31\textwidth}
		\includegraphics[width=1\linewidth]{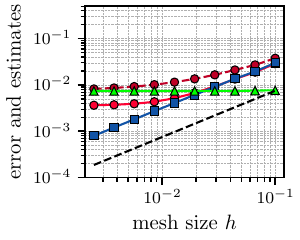}
		\caption{$\epsilon = 0.025$}
		\label{fig:numerical experiments:mesh:intermediate}
	\end{subfigure}
	\hfill
	\begin{subfigure}[T]{0.31\textwidth}
		\includegraphics[width=1\linewidth]{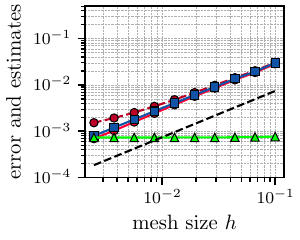}
		\caption{$\epsilon = 0.0025$}
		\label{fig:numerical experiments:mesh:small}
	\end{subfigure}
	\caption{T1: Energy norm of the error, total estimator and estimator components under mesh refinement and for three fixed feature sizes; $\kappa_2=10$.}
	\label{fig:numerical experiments:mesh_kp10}
\end{figure}
\Cref{fig:numerical experiments:mesh:big_kp2} refers to the biggest
considered feature, namely $\epsilon=0.25$. In this case, we observe that, although increasingly finer meshes are considered, the energy norm of the error rapidly reaches a plateau. This indicates that the accuracy cannot be further improved by mesh refinement alone if the feature is neglected. The proposed estimator correctly captures this behavior: although its numerical component $\estim{\mathrm{n}}$ decreases at the expected rate $\mathcal{O}(h)$, the defeaturing component $\estim{\mathrm{d}}$ remains approximately constant, driving the behavior of the total estimator $\estim{}$. \Cref{fig:numerical experiments:mesh:intermediate_kp2} refers instead to an intermediate feature size $\epsilon=0.025$. In this case, we observe that the first steps of mesh refinement reduce the overall error; however, for finer meshes, the defeaturing contribution prevails over the numerical one, and both the error and the estimator approach a plateau. Finally, in \cref{fig:numerical experiments:mesh:small_kp2}, we consider the case in which $\epsilon=0.0025$. Here, we observe that the error and the estimator are primarily driven by their numerical components, with $\estim{\rm{d}}<10^{-3}$. The slight discrepancy between the error and the estimator for the finest meshes in~\cref{fig:numerical experiments:mesh:small_kp2} arises because the mesh resolution approaches that of the reference solution used to compute the exact error. 

Similar results are observed for $\kappa_2=10$ in \cref{fig:numerical experiments:mesh_kp10}, where a higher value of $\estim{\mathrm{d}}$ can be observed for all feature sizes, showcasing how a larger contrast between the diffusion coefficient corresponds to a stronger impact of the feature. A deeper analysis of the effect of the contrast between the coefficients is provided in numerical test T2.

The behavior of the estimator and the energy norm of the error as the feature size decreases is shown in \cref{fig:numerical experiments:size}, for three different mesh sizes and $\kappa_2=2$. As the size of the elements is fixed, the numerical component of the estimator is constant, whereas the defeaturing component decreases as $\mathcal{O}(\varepsilon)$. The numerical component of the estimator almost fully dominates for the coarsest considered meshes ($h=0.075,0.025$), whereas the crossover happens at $\varepsilon \approx 0.02$ in the finest mesh case ($h=0.0075$). 

\begin{figure}
    \centering
    \begin{subfigure}[T]{1\textwidth}
\centering
\includegraphics[width=0.8\linewidth]{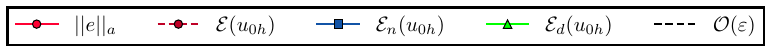}
\end{subfigure}
    \begin{subfigure}[T]{0.31\textwidth}
    \includegraphics[width=1\linewidth]{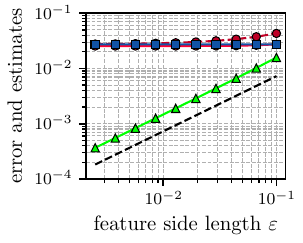}
    \caption{$h = 0.075$}
    \label{fig:numerical experiments:size:coarse}
    \end{subfigure}
    \hfill
    \begin{subfigure}[T]{0.31\textwidth}
    \includegraphics[width=1\linewidth]{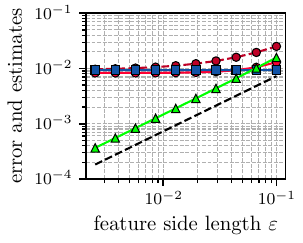}
    \caption{$h = 0.025$}
    \label{fig:numerical experiments:size:intermediate}
    \end{subfigure}
    \hfill
    \begin{subfigure}[T]{0.31\textwidth}
    \includegraphics[width=1\linewidth]{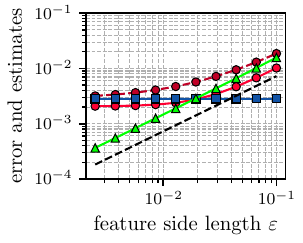}
    \caption{$h = 0.0075$}
    \label{fig:numerical experiments:size:fine}
    \end{subfigure}
    \caption{T1: Results for varying feature sizes in the geometry in \cref{fig:numerical experiments:geometries:sqinsq} for three different mesh sizes.}
    \label{fig:numerical experiments:size}
\end{figure}

\subsection{T2: Interface coarsening, diffusion contrast}
In this second numerical test, we investigate the behavior of the estimator under the variation of the contrast between the diffusion coefficients $\kappa_1$ and $\kappa_2$, considering the geometry reported in \cref{fig:numerical experiments:geometries:leftright}. In particular we choose $\Omega=\rm{int}(\overline{\domaincomp{1}}\cup \overline{\domaincomp{2}})=(-1, 1) \times (-1/2, 1/2)$, whereas the feature $\patch$ is a square of edge $\varepsilon=0.083$ centered at $(-\varepsilon / 2, 0)$. In this geometry, we consider the exact problem \eqref{eq:model problem:weak form} with $f\equiv 1$. The mesh is fixed, with $h=0.005$.

The behavior of the energy norm of the error, of the estimator, and of its components under the variation of the contrast between $\kappa_1$ and $\kappa_2$ is shown in \cref{fig:numerical experiments:coeff}. In particular, in \cref{fig:numerical experiments:coeff:k1}, $\kappa_2$ is kept fixed to 1, whereas we vary $\kappa_1 $ in the interval $[5 \cdot 10^{-1}, 10^4]$. In this case, we can observe how the error plateaus and how the estimator captures this behavior. Let us also remark that, in the limit case of $\kappa_1 \rightarrow\infty$ this scenario corresponds to a classical defeaturing setting (see e.g. \cite{buffa_equilibrated_2024}), in which $\omega$ can be seen as a negative feature at the boundary of $\domaincomp{1}$.

\Cref{fig:numerical experiments:coeff:k2} refers instead to the case in which $\kappa_1$ is kept fixed to 1 whereas $\kappa_2$ varies in the interval $[5 \cdot 10^{-1}, 10^4]$. In this case, it is noteworthy that the error increases with $\kappa_2$ and that the proposed estimator closely follows this trend while remaining reliable across all contrasts.

From the numerical perspective, this behavior can be traced to the two ingredients involved in the defeaturing component, namely $C(\kappa) = |\rho - 1|\,\kappa_2^{-1/2}$ and $\|\flux\|_\patch$. In the experiments where $\kappa_2 = 1$ is held fixed and $\kappa_1 \to \infty$, the factor $C(\kappa)$ tends to one; when instead $\kappa_1 = 1$ is fixed, it grows with $\kappa_2$. As \cref{fig:numerical experiments:coeff_components} shows, the flux norm $\|\flux\|_\patch$ is instead essentially constant across contrasts, and dominates the corresponding numerical quantity $\|\bm{s}_0\|_\patch$. The contrast dependence of $\mathcal{E}_{\rm{d}}$ is, therefore, carried entirely by the rescaling factor $C(\kappa)$.

\begin{figure}[t]
    \centering
    \begin{subfigure}[T]{0.9\textwidth}
    \centering
    \includegraphics[width=1\linewidth]{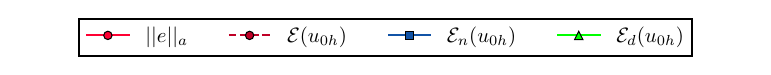}
    \end{subfigure}\\
    \begin{subfigure}[T]{0.45\textwidth}
    \centering
     \includegraphics[width=0.8\linewidth]{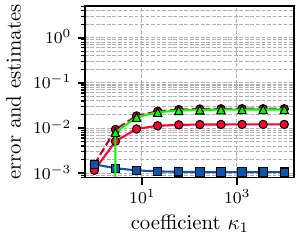}
    \caption{}
    \label{fig:numerical experiments:coeff:k1}
    \end{subfigure}\hfill
    \begin{subfigure}[T]{0.45\textwidth}
    \centering
    \includegraphics[width=0.8\linewidth]{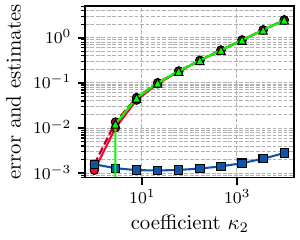}
    \caption{}
    \label{fig:numerical experiments:coeff:k2}
   
    \end{subfigure}
     \caption{T2: Errors and estimator components for varying contrasts in the cases (a) $\kappa_1\gg\kappa_2 = 1$ and (b) $\kappa_2\gg\kappa_1 = 1$.}
    \label{fig:numerical experiments:coeff}
\end{figure}

\begin{rem}
\label{rem:numerical experiments:asymptotic efficiency}
It is interesting to remark how the defeaturing component of the estimator
is asymptotically efficient in the limit $\rho \to \infty$. Let $\kappa_1 = 1$ and $\kappa_2 = \rho > 1$, so that the diffusion coefficient in the feature $\omega$ is equal to $\rho$ in the exact problem and equal to $1$ in the defeatured one.
Let
\begin{equation*}
  \widetilde{\mathcal E}_{\rm{d}}(u_0) := (\rho - 1)\,\kappa_2^{-1/2}\,\|\nabla u_0\|_\omega
\end{equation*}
denote the continuous modeling estimator, obtained by replacing the reconstructed flux $\flux$ in $\mathcal E_\mathrm{d}$ by the exact defeatured flux $-\kappa_0\nabla u_0$, which reduces to $-\nabla u_0$ on $\omega$.

Let us remark that subtracting problem \eqref{eq:model problem:weak form} and \eqref{eq:defeatured problem:weak form} leads to the orthogonality relation $a(u,v)-a_0(u_0,v)=0$ for any $v \in \honez{\Omega}$. It thus follows that
\begin{align*}
||e_\mathrm{d}||_a^2&=a(u-u_0,e_\mathrm{d})=a_0(u_0,e_\mathrm{d})-a(u_0,e_\mathrm{d})=(1-\rho)(\nabla u_0,\nabla e_\mathrm{d})_\patch\\
&=(\rho-1)\big(||\nabla u_0||_\patch^2 - (\nabla u_0,\nabla u)_\patch\big) = (\rho - 1)(1 - c)\,\|\nabla u_0\|_\omega^2
\end{align*}
with $c := \frac{(\nabla u_0, \nabla u)_\omega}{\|\nabla u_0\|_\omega^2}$
and therefore
\begin{equation*}
  \frac{\widetilde{\mathcal E}_{\rm{d}}(u_0)^2}{\|e_d\|_a^2} = \frac{1 - \rho^{-1}}{1 - c}.
\end{equation*}
It remains to show that $c \to 0$ as $\rho \to \infty$. Let us first of all remark that
\begin{align*}
\rho^{1/2}\|\nabla u\|_\omega \le \|u\|_a\leq C_P||f||_\Omega,
\end{align*}
using the stability estimate for the exact problem \eqref{eq:model problem:weak form}. Hence, $\|\nabla u\|_\omega = O(\rho^{-1/2})$. On the other hand, $u_0$ is only marginally influenced by the value of $\rho$, as it is computed considering a unitary diffusion coefficient within the feature $\patch$. This is also reflected by the behavior of $\|\flux \|_\omega$ being almost constant in \cref{fig:numerical experiments:coeff_scaling:k2}. By the Cauchy-Schwarz inequality,
\begin{align*}
  |c| \le \frac{\|\nabla u\|_\omega}{\|\nabla u_0\|_\omega} = O(\rho^{-1/2})
  \longrightarrow 0,
\end{align*}
\begin{figure}
	\centering
	\begin{subfigure}[T]{0.45\textwidth}
		\centering
		\includegraphics[width=0.7\linewidth]{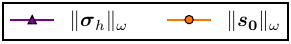}
	\end{subfigure}\vspace{0.1cm}\\
	\begin{subfigure}[T]{0.45\textwidth}
		\centering
		\includegraphics[width=0.75\linewidth]{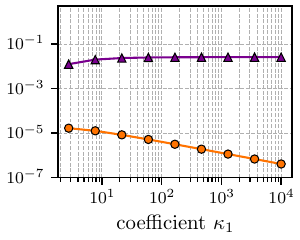}
		\caption{}
		\label{fig:numerical experiments:coeff_scaling:k1}
	\end{subfigure}\hfill
	\begin{subfigure}[T]{0.45\textwidth}
		\centering
		\includegraphics[width=0.75\linewidth]{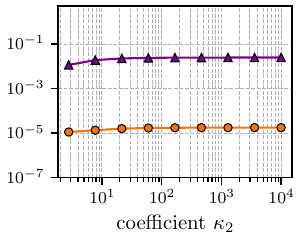}
		\caption{}
		\label{fig:numerical experiments:coeff_scaling:k2}
	\end{subfigure}
	\caption{T2: quantities involved in the assembly of the estimator for varying contrasts: (a) $\kappa_1\gg\kappa_2 = 1$ and (b) $\kappa_2\gg\kappa_1 = 1$.}
	\label{fig:numerical experiments:coeff_components}
\end{figure}
so that $\widetilde{\mathcal E}_{\rm{d}}(u_0)^2/\|e_d\|_a^2 \to 1$. Thus, we conclude that the continuous modeling estimator is asymptotically efficient as the contrast grows. Since $\mathcal E_{\rm{d}}(u_{0h}) \to \widetilde{\mathcal E}_{\rm{d}}(u_0)$ as $h \to 0$, the discrete estimator inherits this behavior up to the error in the flux reconstruction.
\end{rem}

\subsection{T3: Multiple features}
In this third numerical example, we aim to show how the estimator behaves in the presence of multiple features and how it can be used to assess the relevance of each feature at different mesh refinement levels.
We consider the geometry of \cref{fig:numerical experiments:geometries:multi}, which contains six features $\patch_i$, $1 \leq i \leq 6$; the pairs $(\patch_1,\patch_6)$, $(\patch_2,\patch_5)$, and $(\patch_3,\patch_4)$ have equal size but opposite orientation with respect to the defeatured interface $\Gamma_0$ given by the vertical centerline. We set $\kappa_1 = 1$ in $\domaincomp{1}$ and $\kappa_2 = 2$ in $\domaincomp{2}$, impose homogeneous Dirichlet boundary conditions on the outer boundary, and take a constant source term $f \equiv 1$.

\Cref{tab:numerical results:multifeature} reports the numerical component $\mathcal{E}_\mathrm{n}$ of the estimator together with the contribution $\mathcal{E}_\mathrm{d}^i$ of each feature $\patch_i$ to the defeaturing component $\mathcal{E}_\mathrm{d}$. Looking at the columns from 3 to 8, we observe how $\patch_6$ is the most relevant among the features, as $\estim{\mathrm{d}}^6>\estim{\mathrm{d}}^i$ for all $i \neq 6$ and for all the considered mesh sizes. However, it is important not only to provide a ranking of the features, but also to assess whether they are relevant for the overall accuracy of the solution. This is possible by comparing the values of $\estim{\mathrm{d}}^i$ with the value of the numerical component of the estimator $\estim{\mathrm{n}}$. For example, for the coarsest mesh considered ($h = 0.35$), $\patch_6$ is the only feature whose contribution to the estimator exceeds its numerical component. If a finer mesh is considered, the numerical source of error decreases, and a larger number of features impact the accuracy of the solution. On the intermediate mesh ($h = 0.075$), the two largest features $\patch_1$ and $\patch_6$ exceed the numerical contribution, and on the finest mesh, all six features do so.

The defeaturing error is asymmetric with respect to the direction of the material substitution. In particular, features that replace a high-diffusivity inclusion with the lower-diffusivity background ($\patch_2$, $\patch_4$, $\patch_6$) yield defeaturing contributions $1.5$ to $2$ times larger than the symmetric features ($\patch_1$, $\patch_3$, $\patch_5$), which replace a low-diffusivity inclusion with the higher-diffusivity background. This again highlights the solution's greater sensitivity to the omission of highly conductive regions compared to resistive ones, in accordance with experiment~T2. A slight sensitivity of the values of $\estim{\mathrm{d}}^i$ on the mesh size depends on the fact that the considered meshes are not nested.

Overall, this experiment shows how the estimator can be used as a tool to localize the modeling error to individual features, allowing the contribution of each feature to be quantified separately and the most relevant features to be identified at a given mesh size.

\sisetup{
  round-mode = figures,
  round-precision = 3,
  scientific-notation = true,
  tight-spacing = true,
  output-exponent-marker = {e},
}
\begin{table}[t]
\centering
\small 
\setlength{\tabcolsep}{2pt} 
\caption{T3: components of the total estimator for different mesh sizes.}
\label{tab:numerical results:multifeature}
\begin{tabular}{@{}S[table-format=1.2e-1]@{\hspace{6pt}}|@{\hspace{6pt}}S[table-format=1.2e-2]|*{6}{S[table-format=1.2e-2]}@{}}
\toprule
{mesh size $h$} & {$\mathcal{E}_\mathrm{n}(u_{0h})$} & {$\estim{\mathrm{d}}^1$} & {$\estim{\mathrm{d}}^2$} & {$\estim{\mathrm{d}}^3$} & {$\estim{\mathrm{d}}^4$} & {$\estim{\mathrm{d}}^5$} & {$\estim{\mathrm{d}}^6$} \\ \midrule
3.5e-1 & 0.0568827639633597 & 0.0412155994754566 & 0.0112950182542503 & 0.0137751286425658 & 0.0180717552070865 & 0.00730654841368795 & 0.0701134327294442 \\
7.5e-2 & 0.0226821868793000 & 0.0356090676337706 & 0.0102993045642681 & 0.0107252223710728 & 0.0137733593812597 & 0.00607748822390633 & 0.0659412061169828 \\
1e-2   & 0.00315268602575921 & 0.0329437836229053 & 0.0113661417698191 & 0.00899711612678531 & 0.0138036851635137 & 0.00551514277253549 & 0.0705829916548984 \\ \bottomrule
\end{tabular}
\end{table}

\subsection{T4: Inclusion removal}
In this last numerical experiment, we consider the scenario in which $\Gamma_0=\emptyset$. This corresponds to removing an entire 
material inclusion whose diffusion coefficient differs from that of the ambient domain. As observed in \cref{rem:estimator:inclusions}, this scenario is not excluded by our theory, and this test aims to showcase this.  Practically speaking, this could be the case, for example, of a screw in a bidimensional section of a mechanical component, or of some impurity embedded in a material.

In our test case, we consider a square $\Omega=(-1,1)^2$, such that $\Omega=\mathrm{int}(\overline{\Omega_1}\cup\overline{\Omega_2})$, with $\Omega_2=(-\epsilon/2, \epsilon/2)^2$ and $\epsilon<2$ (see \cref{fig:numerical experiments:inclusion}). With respect to the previously introduced notation, the entire $\Omega_2$ represents our feature $\patch_\epsilon$. We set $\kappa_1 = 10$ in $\Omega_1=\Omega \setminus \overline{\Omega_2}$ whereas $\kappa_2 = 1$ in $\Omega_2$. In the defeatured problem, we set $\kappa_0=\kappa_1$ on the whole of $\Omega$.

\Cref{fig:numerical experiments:inclusion:estimate} shows the trend of the energy norm of the error, of the estimator, and of its components for the feature side $\epsilon$ varying in the interval $[0.01, 0.5]$.
The behavior matches that of the feature-size study in T1: the defeaturing component decreases linearly in $\epsilon$, while the numerical component remains approximately constant. With respect to the given mesh size ($h=0.0075$), the inclusion appears to have a significant impact on the solution accuracy for $\epsilon>5\cdot 10^{-2}$. For lower feature sizes, the error and the estimator reach a plateau, both steered by their numerical component.

Let us remark how, in the limit of $\kappa_2\rightarrow 0$, the considered setting approximates the one considered in \cite{buffa_analysis-aware_2022, buffa_equilibrated_2024}, where the impact of filling internal holes with material is considered.

\begin{figure}
    \centering
    \begin{subfigure}[T]{0.33\textwidth}
    \centering\vspace{0.5cm}
    \begin{tikzpicture}[scale=0.6]
    \def\outer{5}
    \def\inner{2.5}
    \def\featureWidth{.6}
    \filldraw[thick, fill=gray!15] (-\outer / 2, -\outer / 2) rectangle (\outer / 2, \outer / 2);
    \filldraw[thick, blue1, fill=white] (-\inner / 2, -\inner / 2) rectangle (\inner / 2, \inner / 2);

    \node[anchor=south west] at (-\outer / 2, -\outer / 2) {$\domaincomp{2}$};
    \node at (0, 0) {$\defdomaincomp{1}=\patch$};

    \node[blue1, anchor=west] at (\inner / 2, 0) {$\defbd$};
    \node[anchor=north] at (0, -\inner / 2) {$s$};
    
\end{tikzpicture}\vspace{0.5cm}
    \caption{}
    \label{fig:numerical experiments:geometry}
    \end{subfigure}
    \hfill
    \begin{subfigure}[T]{0.66\textwidth}
    \centering
    \includegraphics{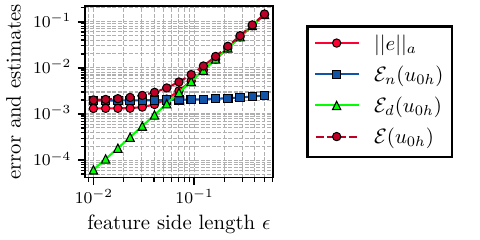}
    \caption{}
    \label{fig:numerical experiments:inclusion:estimate}
    \end{subfigure}\vspace{-0.5cm}
    \caption{T4: Illustration of the square geometry with inclusion (a) and the corresponding defeaturing errors and estimates (b). }
    \label{fig:numerical experiments:inclusion}
\end{figure}

\section{Conclusion}
\label{sec:conclusion}
In this work, we have proposed a combined \emph{a posteriori} error estimator capable of assessing the impact of interface coarsening on the accuracy of the solution for elliptic problems characterized by internal interfaces. The proposed error estimator is based on an equilibrated flux reconstructed from the coarsened-interface numerical solution on a mesh that conforms to the coarsened interface but can be blind to the original one. The difference between the numerical and the equilibrated flux is used to bound the numerical source of error. The modeling component of the error, deriving from the interface coarsening, is instead bounded by a computable approximation of the $L^2$-norm of the reconstructed flux over the coarsened feature. By construction, it decomposes into independent contributions from each feature, providing a feature-by-feature estimate of the modeling error.

We proved that this estimator is reliable for a single feature and extended the result to multiple features and to the removal of entire material inclusions.
The numerical experiments confirm the expected behavior under mesh refinement and feature-size reduction. The application in the multiple-feature case provides a rationale for ranking the features by their impact and assessing their relevance to the overall accuracy of the simulation at a fixed mesh size. The estimator also remains reliable across a broad range of contrasts. When the conducting subdomain is simplified, the modeling estimator is asymptotically efficient as $\rho \to \infty$.

Several research directions remain open. A combined mesh and geometric adaptive strategy, in the spirit of \cite{buffa_adaptive_2024} and \cite{buffa_adaptive_2025}, would exploit the localized modeling estimator to drive both feature inclusion and local refinement. In that case, allowing for an unfitted flux reconstruction would facilitate the inclusion of features deemed relevant by the estimator, without the need to modify the mesh built on the defeatured geometry. Finally, the framework could be extended to more complex transmission problems or to goal-oriented estimators, measuring the impact of interface coarsening on specific quantities of interest rather than only on the solution's accuracy in the global energy norm.

\section*{Acknowledgments}
The authors wish to thank Prof. Annalisa Buffa (EPFL, Switzerland) for many fruitful discussions and for her thoughtful advice. 

Author Philipp Weder was supported by the Swiss National Science Foundation via project MINT n. 200021\_215099, PDE tools for analysis-aware geometry processing in simulation science.

\printbibliography

@article{antolin_analysisaware_2024,
	title = {Analysis‐aware defeaturing of complex geometries with {Neumann} features},
	volume = {125},
	issn = {0029-5981, 1097-0207},
	url = {https://onlinelibrary.wiley.com/doi/10.1002/nme.7380},
	doi = {10.1002/nme.7380},
	language = {en},
	number = {3},
	journal = {International Journal for Numerical Methods in Engineering},
	author = {Antolín, Pablo and Chanon, Ondine},
	month = feb,
	year = {2024},
	pages = {e7380},
}

@article{buffa_equilibrated_2024,
	title = {An {Equilibrated} {Flux} {A} {Posteriori} {Error} {Estimator} for {Defeaturing} {Problems}},
	volume = {62},
	issn = {0036-1429, 1095-7170},
	url = {https://epubs.siam.org/doi/10.1137/23M1627195},
	doi = {10.1137/23M1627195},
	language = {en},
	number = {6},
	journal = {SIAM Journal on Numerical Analysis},
	author = {Buffa, Annalisa and Chanon, Ondine and Grappein, Denise and Vázquez, Rafael and Vohralík, Martin},
	month = dec,
	year = {2024},
	pages = {2439--2458},
}

@article{buffa_analysis-aware_2022,
	title = {Analysis-aware defeaturing: {Problem} setting and \textit{a posteriori} estimation},
	volume = {32},
	issn = {0218-2025, 1793-6314},
	shorttitle = {Analysis-aware defeaturing},
	url = {https://www.worldscientific.com/doi/10.1142/S0218202522500099},
	doi = {10.1142/S0218202522500099},
	language = {en},
	number = {02},
	journal = {Mathematical Models and Methods in Applied Sciences},
	author = {Buffa, Annalisa and Chanon, Ondine and Vázquez, Rafael},
	month = feb,
	year = {2022},
	pages = {359--402},
}

@misc{buffa_adaptive_2024,
	title = {Adaptive analysis-aware defeaturing: the case of {Neumann} boundary conditions},
	shorttitle = {Adaptive analysis-aware defeaturing},
	url = {http://arxiv.org/abs/2212.05183},
	doi = {10.48550/arXiv.2212.05183},
	publisher = {arXiv},
	author = {Buffa, Annalisa and Chanon, Ondine and Vázquez, Rafael},
	month = jul,
	year = {2024},
	note = {arXiv:2212.05183 [math]},
}

@misc{buffa_adaptive_2025,
	title = {Adaptive refinement in defeaturing problems via an equilibrated flux a posteriori error estimator},
	url = {http://arxiv.org/abs/2503.19784},
	doi = {10.48550/arXiv.2503.19784},
	language = {en},
	publisher = {arXiv},
	author = {Buffa, Annalisa and Grappein, Denise and Vázquez, Rafael},
	month = mar,
	year = {2025},
	note = {arXiv:2503.19784 [math]},
}

@misc{weder_analysis-aware_2025,
	title = {Analysis-{Aware} {Defeaturing} of {Dirichlet} {Features}},
	copyright = {All rights reserved},
	url = {http://arxiv.org/abs/2508.13886},
	doi = {10.48550/arXiv.2508.13886},
	publisher = {arXiv},
	author = {Weder, Philipp and Buffa, Annalisa},
	month = aug,
	year = {2025},
	note = {arXiv:2508.13886 [math]},
}

@book{ern_finite_2021,
	address = {Cham},
	series = {Texts in {Applied} {Mathematics}},
	title = {Finite {Elements} {II}: {Galerkin} {Approximation}, {Elliptic} and {Mixed} {PDEs}},
	volume = {73},
	copyright = {https://www.springer.com/tdm},
	isbn = {978-3-030-56922-8 978-3-030-56923-5},
	shorttitle = {Finite {Elements} {II}},
	url = {https://link.springer.com/10.1007/978-3-030-56923-5},
	doi = {10.1007/978-3-030-56923-5},
	language = {en},
	publisher = {Springer International Publishing},
	author = {Ern, Alexandre and Guermond, Jean-Luc},
	year = {2021},
}

@article{ern_polynomial-degree-robust_2015,
	title = {Polynomial-{Degree}-{Robust} {A} {Posteriori} {Estimates} in a {Unified} {Setting} for {Conforming}, {Nonconforming}, {Discontinuous} {Galerkin}, and {Mixed} {Discretizations}},
	volume = {53},
	issn = {0036-1429, 1095-7170},
	url = {http://epubs.siam.org/doi/10.1137/130950100},
	doi = {10.1137/130950100},
	language = {en},
	number = {2},
	journal = {SIAM Journal on Numerical Analysis},
	author = {Ern, Alexandre and Vohralík, Martin},
	month = jan,
	year = {2015},
	pages = {1058--1081},
}

@misc{weder_certified_2025,
	title = {A {Certified} {Goal}-{Oriented} {A} {Posteriori} {Defeaturing} {Error} {Estimator} for {Elliptic} {PDEs}},
	url = {http://arxiv.org/abs/2512.20124},
	doi = {10.48550/arXiv.2512.20124},
	language = {en},
	publisher = {arXiv},
	author = {Weder, Philipp and Buffa, Annalisa},
	month = dec,
	year = {2025},
	note = {arXiv:2512.20124 [math]},
}

@article{braess_equilibrated_2007,
	title = {Equilibrated residual error estimator for edge elements},
	volume = {77},
	issn = {0025-5718},
	url = {http://www.ams.org/journal-getitem?pii=S0025-5718-07-02080-7},
	doi = {10.1090/S0025-5718-07-02080-7},
	language = {en},
	number = {262},
	journal = {Mathematics of Computation},
	author = {Braess, Dietrich and Schöberl, Joachim},
	month = nov,
	year = {2007},
	pages = {651--673},
}

@book{braess_finite_2007,
	edition = {3},
	title = {Finite {Elements}: {Theory}, {Fast} {Solvers}, and {Applications} in {Solid} {Mechanics}},
	copyright = {https://www.cambridge.org/core/terms},
	isbn = {978-0-521-70518-9 978-0-511-61863-5},
	shorttitle = {Finite {Elements}},
	url = {https://www.cambridge.org/core/product/identifier/9780511618635/type/book},
	doi = {10.1017/CBO9780511618635},
	language = {en},
	publisher = {Cambridge University Press},
	author = {Braess, Dietrich},
	month = apr,
	year = {2007},
}

@article{repin_combined_2012,
	title = {Combined \textit{a posteriori} modeling-discretization error estimate for elliptic problems with complicated interfaces},
	volume = {46},
	issn = {0764-583X, 1290-3841},
	url = {http://www.esaim-m2an.org/10.1051/m2an/2012007},
	doi = {10.1051/m2an/2012007},
	language = {en},
	number = {6},
	journal = {ESAIM: Mathematical Modelling and Numerical Analysis},
	author = {Repin, Sergey I. and Samrowski, Tatiana S. and Sauter, Stéfan A.},
	month = nov,
	year = {2012},
	pages = {1389--1405},
}

@misc{capatina_elliptic_2025,
	title = {Elliptic interface problem approximated by {CutFEM}: {I}. {Conservative} flux recovery and numerical validation of adaptive mesh refinement},
	shorttitle = {Elliptic interface problem approximated by {CutFEM}},
	url = {http://arxiv.org/abs/2507.03492},
	doi = {10.48550/arXiv.2507.03492},
	language = {en},
	publisher = {arXiv},
	author = {Capatina, Daniela and Gouasmi, Aimene and He, Cuiyu},
	month = jul,
	year = {2025},
	note = {arXiv:2507.03492 [math]},
}

@article{geuzaine_gmsh_2009,
	title = {Gmsh: {A} 3‐{D} finite element mesh generator with built‐in pre‐ and post‐processing facilities},
	volume = {79},
	copyright = {http://onlinelibrary.wiley.com/termsAndConditions\#vor},
	issn = {0029-5981, 1097-0207},
	shorttitle = {Gmsh},
	url = {https://onlinelibrary.wiley.com/doi/10.1002/nme.2579},
	doi = {10.1002/nme.2579},
	language = {en},
	number = {11},
	journal = {International Journal for Numerical Methods in Engineering},
	author = {Geuzaine, Christophe and Remacle, Jean‐François},
	month = sep,
	year = {2009},
	pages = {1309--1331},
}

@misc{baratta_dolfinx_2023,
	title = {{DOLFINx}: {The} next generation {FEniCS} problem solving environment},
	copyright = {Creative Commons Attribution 4.0 International},
	shorttitle = {{DOLFINx}},
	url = {https://zenodo.org/doi/10.5281/zenodo.10447666},
	doi = {10.5281/ZENODO.10447666},
	language = {en},
	publisher = {Zenodo},
	author = {Baratta, Igor A. and Dean, Joseph P. and Dokken, Jørgen S. and Habera, Michal and Hale, Jack S. and Richardson, Chris N. and Rognes, Marie E. and Scroggs, Matthew W. and Sime, Nathan and Wells, Garth N.},
	month = dec,
	year = {2023},
}

@article{prager_approximations_1947,
	title = {Approximations in elasticity based on the concept of function space},
	volume = {5},
	issn = {0033-569X, 1552-4485},
	url = {https://www.ams.org/qam/1947-05-03/S0033-569X-1947-25902-8/},
	doi = {10.1090/qam/25902},
	language = {en},
	number = {3},
	journal = {Quarterly of Applied Mathematics},
	author = {Prager, W. and Synge, J. L.},
	month = oct,
	year = {1947},
	pages = {241--269},
}

@article{fine_automated_2000,
	title = {Automated generation of {FEA} models through idealization operators},
	volume = {49},
	copyright = {http://doi.wiley.com/10.1002/tdm\_license\_1.1},
	issn = {0029-5981, 1097-0207},
	url = {https://onlinelibrary.wiley.com/doi/10.1002/1097-0207(20000910/20)49:1/2<83::AID-NME924>3.0.CO;2-N},
	doi = {10.1002/1097-0207(20000910/20)49:1/2<83::AID-NME924>3.0.CO;2-N},
	language = {en},
	number = {1-2},
	journal = {International Journal for Numerical Methods in Engineering},
	author = {Fine, L. and Remondini, L. and Leon, J.-C.},
	month = sep,
	year = {2000},
	pages = {83--108},
}

@article{foucault_mechanical_2004,
	title = {Mechanical {Criteria} for the {Preparation} of {Finite} {Element} {Models}.},
	journal = {Proceedings of the 13th International Meshing Roundtable},
	author = {Foucault, Gilles and Marin, Philippe and Léon, Jean-Claude},
	year = {2004},
	pages = {413--426},
}

@article{thakur_survey_2009,
	title = {A survey of {CAD} model simplification techniques for physics-based simulation applications},
	volume = {41},
	copyright = {https://www.elsevier.com/tdm/userlicense/1.0/},
	issn = {00104485},
	url = {https://linkinghub.elsevier.com/retrieve/pii/S0010448508002285},
	doi = {10.1016/j.cad.2008.11.009},
	language = {en},
	number = {2},
	journal = {Computer-Aided Design},
	author = {Thakur, Atul and Banerjee, Ashis Gopal and Gupta, Satyandra K.},
	month = feb,
	year = {2009},
	pages = {65--80},
}

@book{bensoussan_asymptotic_2011,
	address = {Providence, R.I},
	series = {{AMS} {Chelsea} {Publishing}},
	title = {Asymptotic analysis for periodic structures},
	isbn = {978-0-8218-5324-5 978-1-4704-1581-5},
	language = {eng},
	number = {v. 374},
	publisher = {American Mathematical Society},
	author = {Bensoussan, Alain},
	collaborator = {Lions, Jacques-Louis and Papanicolaou, George},
	year = {2011},
}

@book{bakhvalov_homogenisation_1989,
	address = {Dordrecht},
	series = {Mathematics and its {Applications}},
	title = {Homogenisation: {Averaging} {Processes} in {Periodic} {Media}},
	volume = {36},
	copyright = {http://www.springer.com/tdm},
	isbn = {978-94-010-7506-0 978-94-009-2247-1},
	shorttitle = {Homogenisation},
	url = {http://link.springer.com/10.1007/978-94-009-2247-1},
	publisher = {Springer Netherlands},
	author = {Bakhvalov, N. and Panasenko, G.},
	editor = {Hazewinkel, M.},
	year = {1989},
	doi = {10.1007/978-94-009-2247-1},
}

@article{peskin_immersed_2002,
	title = {The immersed boundary method},
	volume = {11},
	copyright = {https://www.cambridge.org/core/terms},
	issn = {0962-4929, 1474-0508},
	url = {https://www.cambridge.org/core/product/identifier/S0962492902000077/type/journal_article},
	doi = {10.1017/S0962492902000077},
	language = {en},
	journal = {Acta Numerica},
	author = {Peskin, Charles S.},
	month = jan,
	year = {2002},
	pages = {479--517},
}

@article{fries_extendedgeneralized_2010,
	title = {The extended/generalized finite element method: {An} overview of the method and its applications},
	volume = {84},
	copyright = {http://onlinelibrary.wiley.com/termsAndConditions\#vor},
	issn = {0029-5981, 1097-0207},
	shorttitle = {The extended/generalized finite element method},
	url = {https://onlinelibrary.wiley.com/doi/10.1002/nme.2914},
	doi = {10.1002/nme.2914},
	language = {en},
	number = {3},
	journal = {International Journal for Numerical Methods in Engineering},
	author = {Fries, Thomas‐Peter and Belytschko, Ted},
	month = oct,
	year = {2010},
	pages = {253--304},
}

@article{burman_fictitious_2012,
	title = {Fictitious domain finite element methods using cut elements: {II}. {A} stabilized {Nitsche} method},
	volume = {62},
	copyright = {https://www.elsevier.com/tdm/userlicense/1.0/},
	issn = {01689274},
	shorttitle = {Fictitious domain finite element methods using cut elements},
	url = {https://linkinghub.elsevier.com/retrieve/pii/S0168927411000298},
	doi = {10.1016/j.apnum.2011.01.008},
	language = {en},
	number = {4},
	journal = {Applied Numerical Mathematics},
	author = {Burman, Erik and Hansbo, Peter},
	month = apr,
	year = {2012},
	pages = {328--341},
}

@article{burman_fictitious_2010,
	title = {Fictitious domain finite element methods using cut elements: {I}. {A} stabilized {Lagrange} multiplier method},
	volume = {199},
	copyright = {https://www.elsevier.com/tdm/userlicense/1.0/},
	issn = {00457825},
	shorttitle = {Fictitious domain finite element methods using cut elements},
	url = {https://linkinghub.elsevier.com/retrieve/pii/S004578251000160X},
	doi = {10.1016/j.cma.2010.05.011},
	language = {en},
	number = {41-44},
	journal = {Computer Methods in Applied Mechanics and Engineering},
	author = {Burman, Erik and Hansbo, Peter},
	month = oct,
	year = {2010},
	pages = {2680--2686},
}

@article{boffi_finite_2003,
	title = {A finite element approach for the immersed boundary method},
	volume = {81},
	copyright = {https://www.elsevier.com/tdm/userlicense/1.0/},
	issn = {00457949},
	url = {https://linkinghub.elsevier.com/retrieve/pii/S0045794902004042},
	doi = {10.1016/S0045-7949(02)00404-2},
	language = {en},
	number = {8-11},
	journal = {Computers \& Structures},
	author = {Boffi, Daniele and Gastaldi, Lucia},
	month = may,
	year = {2003},
	pages = {491--501},
}

@article{borgers_finite_1990,
	title = {On {Finite} {Element} {Domain} {Imbedding} {Methods}},
	volume = {27},
	issn = {0036-1429, 1095-7170},
	url = {http://epubs.siam.org/doi/10.1137/0727055},
	doi = {10.1137/0727055},
	language = {en},
	number = {4},
	journal = {SIAM Journal on Numerical Analysis},
	author = {Börgers, Christoph and Widlund, Olof B.},
	month = aug,
	year = {1990},
	pages = {963--978},
}

@article{babuska_finite_1973,
	title = {The {Finite} {Element} {Method} with {Penalty}},
	volume = {27},
	issn = {00255718},
	url = {https://www.jstor.org/stable/2005611?origin=crossref},
	doi = {10.2307/2005611},
	number = {122},
	journal = {Mathematics of Computation},
	author = {Babuska, Ivo},
	month = apr,
	year = {1973},
	pages = {221},
}

@article{weymuth_posteriori_2017,
	title = {A {Posteriori} {Modelling}-{Discretization} {Error} {Estimate} for {Elliptic} {Problems} with \textit{{L}}$^{\infty}$ -{Coefficients}},
	volume = {17},
	issn = {1609-9389, 1609-4840},
	url = {https://www.degruyter.com/document/doi/10.1515/cmam-2017-0015/html},
	doi = {10.1515/cmam-2017-0015},
	language = {en},
	number = {3},
	journal = {Computational Methods in Applied Mathematics},
	author = {Weymuth, Monika and Sauter, Stefan and Repin, Sergey},
	month = jul,
	year = {2017},
	pages = {515--531},
}

@article{capatina_elliptic_2026,
	title = {Elliptic interface problem approximated by {CutFEM}: {II}. {A} posteriori error analysis based on equilibrated fluxes},
	copyright = {https://www.edpsciences.org/en/authors/copyright-and-licensing},
	issn = {2822-7840, 2804-7214},
	shorttitle = {Elliptic interface problem approximated by {CutFEM}},
	url = {https://www.esaim-m2an.org/10.1051/m2an/2026043},
	doi = {10.1051/m2an/2026043},
	journal = {ESAIM: Mathematical Modelling and Numerical Analysis},
	author = {Capatina, Daniela and Gouasmi, Aimene},
	month = may,
	year = {2026},
}

\end{document}